\newcommand{\trans}{{}^\top}
\renewcommand{\Re}{\mathbb R}
\newcommand{\cX}{\mathcal{X}}
\newcommand{\cU}{\mathcal{U}}
\newcommand{\cY}{\mathcal{Y}}
\newcommand{\cN}{\mathcal{N}} 
\newcommand{\cO}{\mathcal{O}}
\theoremstyle{definition}
\newtheorem{thm}{Theorem}[section]
\newtheorem{prop}[thm]{Proposition}
\newtheorem{defn}[thm]{Definition}
\newtheorem{lem}[thm]{Lemma}
\newtheorem*{rem}{Remark}
\newtheorem*{example}{Example}
\newtheorem*{problem}{Set Stabilization Problem}
\newcommand{\phiu}{\phi_u}
\newcommand{\LL}{L^+}
\newcommand{\Lu}{L^+_u}
\newcommand{\Ju}{J^+_u}
\newcommand{\JJ}{J^+}
\renewcommand{\SS}{{\cal S}}
\DeclareMathOperator{\cl}{cl}
\begin{document}
\title{Reduction Principles and the Stabilization of
  Closed Sets for Passive Systems}%

\author{Mohamed~I.~El-Hawwary,
  Manfredi~Maggiore,%
\thanks{The authors are with the Department of Electrical and Computer
  Engineering, University of Toronto, Toronto, ON M5S 3G4,
  Canada. E-mails: {\tt\footnotesize
    \{melhawwary,maggiore\}@control.utoronto.ca}.}%
\thanks{This research was supported by the National Sciences and
  Engineering Research Council of Canada.}%
\\[2ex] {\small This paper appeared in the {\em IEEE Transactions on
  Automatic Control}, vol. 55, no. 4, 2010, pp. 982--987}
}

\maketitle

\begin{abstract}
In this paper we explore the stabilization of closed invariant sets
for passive systems, and present conditions under which a
passivity-based feedback asymptotically stabilizes the goal set. Our
results rely on novel reduction principles allowing one to extrapolate
the properties of stability, attractivity, and asymptotic stability of
a dynamical system from analogous properties of the system on an
invariant subset of the state space.
\end{abstract}

\section{Introduction}
The notion of passivity for state space representations of nonlinear
systems, pioneered by Willems in the early
1970's,~\cite{willems1,willems2}, was instrumental for much research
on nonlinear equilibrium stabilization. Key contributions in this area
were made in the early 1980's by Hill and Moylan
in~\cite{HilMoy76,HilMoy77,HilMoy80_1,HilMoy80_2}, and later by
Byrnes, Isidori, and Willems, in their landmark paper~\cite{ByrIsiWil91}.
More recently, in a number of
papers~\cite{ShiFra00,Shi00_1,Shi00_2}, Shiriaev and Fradkov addressed the
problem of stabilizing compact invariant sets for passive nonlinear
systems. Their work is a direct extension of the equilibrium
stabilization results by Byrnes, Isidori, and Willems in \cite{ByrIsiWil91}.

In this paper we develop a theory of set stabilization for passive
systems which generalizes the equilibrium theory
of~\cite{ByrIsiWil91}, as well as the results
in~\cite{ShiFra00,Shi00_1,Shi00_2}.  We investigate the stabilization
of a closed set $\Gamma$, not necessarily compact, which is open-loop
invariant and contained in the zero level set of the storage
function. Our results answer this question: {\em when is it that a
  passivity-based controller makes $\Gamma$ asymptotically stable for
  the closed-loop system?} Even in the special case when $\Gamma$ is
an equilibrium, our theory yields novel results, among them necessary
and sufficient conditions for the passivity-based asymptotic
stabilization of the equilibrium in question without imposing that the
storage function be positive definite. The theory
in~\cite{ByrIsiWil91}, and~\cite{ShiFra00,Shi00_1,Shi00_2} does not
handle this situation.

The key insight behind the development of the results presented in
this paper is the realization that at the heart of the stabilization
problem by passivity-based feedback there lies a so-called reduction
problem for a dynamical system $\Sigma: \dot x = f(x)$: {\em Consider
  two closed sets $\Gamma$ and $\cO$, with $\Gamma \subset \cO$, which
  are invariant for $\Sigma$; suppose that $\Gamma$ is stable,
  attractive, or asymptotically stable for the restriction of $\Sigma$
  to $\cO$. When is it that $\Gamma$ is stable, attractive, or
  asymptotically stable with respect to the whole state space?}  We
answer this question by presenting three novel reduction principles
for attractivity, stability, and asymptotic stability that have
independent interest and are applicable to other problems in control
theory. The proofs of these and other results are omitted in this
shortened paper. The interested reader is referred to the full
version~\cite{ElHMag09_2} and~\cite{ElHMag09_3}.

{\em Outline:} Section~\ref{sec:prem-prob} presents stability
definitions and reviews the notion of limit set and that of
prolongational limit set. In Section~\ref{sec:problems} we state the
passivity-based set stabilization problem and illustrate its
relationship to the reduction problem. We then present the reduction
principles.  Section~\ref{sec:detectability} presents a novel notion
of detectability using which, in
Section~\ref{sec:solution_set_stabilization}, we solve the
passivity-based set stabilization problem. The main result,
Theorem~\ref{thm:semi-asymptotic_stability}, generalizes previous
results on passivity-based stabilization. This fact is discussed in
Section~\ref{sec:discussion}.

\section{Preliminaries} 
\label{sec:prem-prob}
In this paper we consider control-affine systems described by
\begin{equation}
\begin{aligned}
&\dot{x} =  f(x)+\sum_{i=1}^m g_i(x)u_i \\
&y  =  h(x)
\end{aligned}
\label{eq:sys}
\end{equation}
with state space $\mathcal{X}\subset \Re^n$, set of input values
$\cU\subset \Re^m$ and set of output values $\cY \subset \Re^m$. The
set $\cX$ is assumed to be either an open subset or a smooth
submanifold of $\Re^n$. We assume that $f$ and $g_i$, $i=1,\ldots m$,
are smooth vector fields on $\cX$, and that $h: \cX \to \cY$ is a
smooth mapping.

\subsubsection*{Notation}
Let $\Re^+=[0,+\infty)$.  Given either a smooth feedback $u(x)$ or a
  piecewise-continuous open-loop control $u(t):\Re^+\to \cU$, we
  denote by $\phiu(t,x_0)$ the unique solution of~\eqref{eq:sys} with
  initial condition $x_0$. By $\phi(t,x_0)$ we denote the solution of
  the open-loop system $\dot x = f(x)$ with initial condition
  $x_0$. Given an interval $I$ of the real line and a set $S \in \cX$,
  we denote by $\phiu(I,S)$ the set $\phiu(I,S):=\{\phiu(t,x_0): t \in
  I, x_0 \in S\}$. The set $\phi(I,S)$ is defined analogously.  Given
  a closed nonempty set $S \subset \Re^n$, a point $x \in \Re^n$, and
  a vector norm $\|\cdot\|:\Re^n\to \Re$, the point-to-set distance
  $\|x\|_S$ is defined as $\|x\|_S :=\inf\{\|x-y\|: y \in S\}$. The
  state space $\cX$, being a subset of $\Re^n$, inherits a norm from
  $\Re^n$, which we will denote $\|\cdot\|: \cX \to \Re$. For a
  constant $\alpha > 0$, a point $x \in \cX$, and a set $S \subset
  \cX$, define the open sets $B_{\alpha}(x)=\{y \in \cX: \|y-x\|<
  \alpha\}$ and $B_{\alpha}(S)=\{y \in \cX: \|y\|_{S}< \alpha\}$. We
  denote by $\cl(S)$ the closure of the set $S$, and by $\cN(S)$ an
  open neighbourhood of $S$, that is, an open subset of $\cX$
  containing $S$.  We use the standard notation $L_f V$ to denote the
  Lie derivative of a $C^1$ function $V$ along a vector field $f$.

\subsubsection*{Passivity}
Throughout this paper it is assumed that \eqref{eq:sys} is
passive with smooth nonnegative storage function $V: \cX \to \Re$,
i.e., $V$ is a $C^r$ ($r \geq 1$) nonnegative function such that, for
all piecewise-continuous functions $u: [0, \infty) \to \cU$, for all
  $x_0 \in \mathcal{X}$, and for all $t$ in the maximal interval of
  existence of $\phiu(\cdot,x_0)$,
\begin{equation*}
V(\phiu(t,x_0))-V(x_0) \leq \int_0^t {u(\tau)\trans y(\tau)} d\tau
\label{eq:storage_inequality},
\end{equation*}
where $y(t) = h(\phiu(t,x_0))$. It is well-known (see \cite{HilMoy76})
that the passivity property above is equivalent to the two conditions
\begin{equation}\label{eq:passivity}
(\forall x \in \cX) \ L_f V(x) \leq 0 \ \mbox{and} \
\ L_g V(x) = h(x)\trans,
\end{equation}
where $L_g V$ denotes the row vector $[L_{g_1} V \ \cdots \ L_{g_m}
  V]$.

\subsubsection*{Set stability and attractivity}
All definitions below are standard and can be found
in~\cite{BatSze67}. Let $\Gamma \subset \cX$ be a closed positively
invariant for a dynamical system
\begin{equation}
\label{eq:Sigma}
\Sigma: \dot x = f(x),\ \  x \in \cX.
\end{equation}

\begin{defn} [Set stability and attractivity]
\label{defn:set-stab}

\begin{enumerate}[(i)]

\item $\Gamma$ is {\em stable} for $\Sigma$ if for all
  $\varepsilon >0$ there exists a neighbourhood $\cN(\Gamma)$ such
  that $\phi(\Re^+,\cN(\Gamma)) \subset B_\varepsilon(\Gamma)$.

\item $\Gamma$ is an {\em attractor} for $\Sigma$ if there exists a
  neighbourhood $\mathcal{N}(\Gamma)$ such that, for all $x_0 \in
  \cN(\Gamma)$, $\lim_{t \to \infty}\|\phi(t,x_0)\|_{\Gamma}=0$.

\item $\Gamma$ is a {\em global attractor} for $\Sigma$ if it is 
an attractor with $\cN(\Gamma) = \cX$.

\item $\Gamma$ is {\em [globally] asymptotically stable} for
  $\Sigma$ if it is stable and attractive [globally attractive]
  for $\Sigma$.

\end{enumerate}
\end{defn}

If $\Gamma$ is a compact positively invariant set, then the concepts
of stability, attractivity, and asymptotic stability, as defined
above, are equivalent to the familiar $\epsilon$-$\delta$ notions of
uniform stability, attractivity, and asymptotic stability found, e.g.,
in~\cite[Definition 8.1]{Kha02}.  In the unbounded case, however, our
definitions of attractivity and asymptotic stability, referred to as
semi-attractivity and semi-asymptotic stability in~\cite{BatSze67},
are weaker than the corresponding $\epsilon$-$\delta$ notions.  For
instance, the $\epsilon-\delta$ notion of attractivity requires that
the domain of attraction of $\Gamma$ contains a tube of radius
$\delta$, whereas the notion of attractivity in the definition above
does not, and in fact if $\Gamma$ is unbounded the width of its domain
of attraction may shrink to zero at infinity.

\begin{defn}[Relative set stability and attractivity]
Let $\cO \subset \cX$ be such that $\cO \cap \Gamma \neq \emptyset$.
We say that $\Gamma$ is {\em stable relative to} $\cO$ for $\Sigma$
if, for any $\varepsilon >0$, there exists a neighbourhood
$\cN(\Gamma)$ such that $\phi(\Re^+,\cN(\Gamma) \cap \cO)
  \subset B_\varepsilon(\Gamma)$. Similarly, one modifies all other
  notions in Definition~\ref{defn:set-stab} by restricting initial
  conditions to lie in $\cO$.
\end{defn}

\begin{defn}[Local stability and attractivity near a set]
\label{defn:local_stab}
Let $\Gamma$ and $\cO$, $\Gamma \subset \cO \subset \cX$, be
positively invariant sets. The set $\cO$ is {\em locally stable near
  $\Gamma$} if for all $x \in \Gamma$, for all $c>0$, and all
$\varepsilon>0$, there exists $\delta>0$ such that for all $x_0 \in
B_\delta(\Gamma)$ and all $t>0$, whenever $\phi([0,t],x_0) \subset
B_c(x)$ one has that $\phi([0,t],x_0) \subset B_\varepsilon(\cO)$.
The set $\cO$ is {\em locally attractive near $\Gamma$} if
there exists a neighbourhood $\cN(\Gamma)$ such that, for all $x_0 \in
\cN(\Gamma)$, $\phi(t,x_0) \to \cO$ at $t \to +\infty$.
\end{defn}
\begin{figure}[htb]
\psfrag{bdg}{\footnotesize $B_\delta(\Gamma)$}
\psfrag{beo}{\footnotesize $B_\varepsilon(\cO)$}
\psfrag{G}{\footnotesize $x \in\Gamma$}
\psfrag{O}{\footnotesize $\cO$}
\psfrag{bcx}{\footnotesize $B_c(x)$}
\centerline{\includegraphics[width=.4\textwidth]{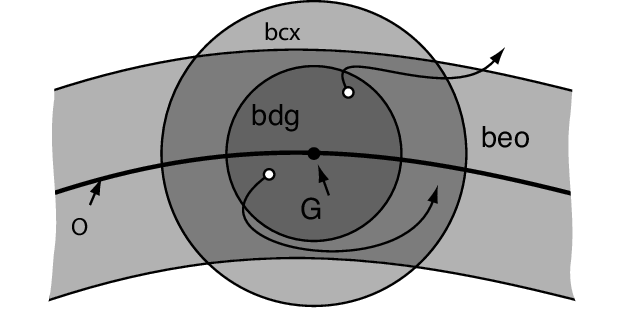}}
\caption{An illustration of the notion of local stability near $\Gamma$}
\label{fig:local_stability}
\end{figure}
The property of local stability can be rephrased as follows. Given an
arbitrary ball $B_c(x)$ centred at a point $x$ in $\Gamma$,
trajectories originating in $B_c(x)$ sufficiently close to $\Gamma$
cannot travel far away from $\cO$ before first exiting $B_c(x)$; see
Figure~\ref{fig:local_stability}. It is immediate to see that if
$\Gamma$ is stable, then $\cO$ is locally stable near $\Gamma$.
\begin{defn}[Local uniform boundedness]
\label{defn:LUB}
The system $\Sigma$ is {\em locally uniformly bounded near
  $\Gamma$} if for each $x \in \Gamma$ there exist positive scalars
$\lambda$ and $m$ such that $\phi(\Re^+,B_\lambda(x)) \subset B_m(x)$.
\end{defn}
\subsubsection*{Limit Sets}

In order to characterize the asymptotic properties of bounded
solutions, we will use the well-known notion of limit set, due to
G.~D.~Birkhoff (see~\cite{Bir27}), and that of prolongational limit
set, due to T.~Ura (see~\cite{Ura59}).  Given a smooth feedback $u(x)$
and a point $x_0 \in \cX$, the {\em positive limit set} (or
$\omega$-limit set) of the closed-loop solution $\phiu(t,x_0)$ is
defined as
\[
\begin{aligned}
\Lu(x_0) := \{p \in \cX: (\exists \{t_n\}\subset \Re^+)  \, & t_n \to
+ \infty,\\
&\phiu(x_0,t_n)\to p\}.
\end{aligned}
\]
The positive limit set of the open-loop solution $\phi(t,x_0)$,
defined in an analogous way, is denoted $\LL(x_0)$.  We let
$\Lu(S):=\bigcup_{x_0 \in S} \Lu(x_0)$ and $\LL(S):=\bigcup_{x_0 \in
  S} \LL(x_0)$.

If $U\subset \cX$ and $x_0 \in \cl(U)$, the {\em prolongational limit
  set relative to $U$} of an open-loop solution $\phi(t,x_0)$ is
defined as
\[
\begin{aligned}
\JJ(x_0,U):=\{ p \in {\mathcal X} : (\exists \{(x_n,&t_n)\} \subset U
\times \Re^+), \, x_n \to x_0,\\
& t_n \to +\infty, \,
\phi(x_n,t_n)\to p\}.
\end{aligned}
\]
We denote $\JJ(S,U):=\bigcup_{x_0 \in S} \Ju(x_0,U)$.  One can show
that if $x_0 \in \cl(U)$, then $\LL(x_0) \subset \JJ(x_0,U)$.

\section{Stabilization problem and reduction
  principles}\label{sec:problems}  

The main objective of this paper is the stabilization of a closed set
$\Gamma$ using {\em passivity-based feedbacks} of the form
\begin{equation}
\label{eq:passivity-based_feedback}
u=-\varphi(x), \text{ with } \varphi(\cdot)\Big|_{h(x)=0}=0,
\ h(x)\trans \varphi(x)\Big|_{h(x)\neq 0} >0,
\end{equation}
where $\varphi: \cX \to \cU$ is a smooth function.  The class of
passivity-based feedbacks in~\eqref{eq:passivity-based_feedback}
includes that of output feedback controllers $u=-\varphi(h(x))$
commonly used in the literature on passive systems.

\begin{problem}
Given a closed set $\Gamma \subset V^{-1}(0)= \{x \in \cX: V(x)=0\}$
which is positively invariant for the open-loop system in
\eqref{eq:sys}, and given a passivity-based feedback of the
form~\eqref{eq:passivity-based_feedback}, find conditions guaranteeing
that $\Gamma$ is [globally] asymptotically stable for the
closed-loop system.
\end{problem}
The rationale behind passivity-based feedback is the following.
Using~\eqref{eq:passivity} and the properties of the passivity-based
feedback~\eqref{eq:passivity-based_feedback}, the time derivative of
the storage function $V$ along trajectories of the closed-loop system
formed by~\eqref{eq:sys} with
feedback~\eqref{eq:passivity-based_feedback} is given by
\begin{equation}\label{eq:Vdot}
\begin{aligned}
\frac{d V(\phiu(t,x_0))}{dt} =& L_f V(\phiu(t,x_0))\\ &- L_g V
(\phiu(t,x_0)) \varphi(\phiu(t,x_0))\\
\leq & - h(\phiu(t,x_0))\trans \varphi(\phiu(t,x_0)) \leq 0.
\end{aligned}
\end{equation}
Thus, a passivity-based feedback renders the storage function $V$
nonincreasing along solutions of the closed-loop system. One expects
that if the system enjoys suitable properties, then the storage
function should decrease asymptotically to zero and the solutions
should approach a subset of $V^{-1}(0)$, hopefully the set
$\Gamma$.

Our point of departure in understanding what system properties yield
the required result is the well-known property, found in the proof of
Theorem~3.2 in~\cite{ByrIsiWil91}, that, for all $x_0 \in \cX$, the
positive limit set $\Lu(x_0)$ of the closed-loop system is invariant
for the open-loop system and such that $\Lu(x_0) \subset h^{-1}(0)$.
Let $\cO$ denote the {\em maximal} set contained in $h^{-1}(0)$ which
is invariant for the open-loop system.  In light of the property
above, if $\Lu(x_0)$ is non-empty, then it must be contained in
$\cO$. Therefore, all bounded trajectories of the closed-loop system
asymptotically approach $\cO$.  Since $L_f V\leq 0$, $V$ is
nonincreasing along solutions of the open-loop system, and so
$V^{-1}(0)$ is an invariant set for the open-loop system. Moreover,
since $V$ is nonnegative, any point $x \in V^{-1}(0)$ is a local
minimum of $V$ and hence $dV(x)=0$. Therefore, $L_g V(x)=h(x)\trans=0$
on $V^{-1}(0)$, and so $\Gamma \subset V^{-1}(0) \subset
h^{-1}(0)$. Since $V^{-1}(0)$ is invariant and contained in
$h^{-1}(0)$, it is necessarily a subset of $\cO$ (this implies that
$\cO$ is not empty). Putting everything together, we conclude that
\begin{equation}
\label{eq:inclusion}
\Gamma \subset V^{-1}(0) \subset \cO \subset h^{-1}(0).
\end{equation}
It is then clear that if the trajectories of the closed-loop system in
a neighbourhood of $\Gamma$ are bounded, the least a passivity-based
feedback can guarantee is the attractivity of $\cO$; but this is
not sufficient for our purposes. Notice that, on $\cO$,
$\varphi(\cdot)=0$ and so the closed-loop dynamics on $\cO$ coincide
with the open-loop dynamics. In particular, then, $\cO$ is an
invariant set for the closed-loop system.  In order to ensure the
property of asymptotic stability of $\Gamma$, the open-loop
system {\em must} enjoy the same property {\em relative to
  $\cO$}. Therefore, a necessary condition for $\Gamma$ to be
asymptotically stable for the closed-loop system is that $\Gamma$
be asymptotically stable relative to $\cO$ for the open-loop
system. Is this condition also sufficient or are extra-properties
  needed?  This question leads to the reduction problem stated in
the introduction: {\em If $\Gamma \subset \cO$ is stable,
  attractive, or asymptotically stable relative to $\cO$,
  what extra conditions guarantee that $\Gamma$ is stable,
  attractive, or asymptotically stable with respect to the
  whole state space?}  This problem was originally formulated by
P. Seibert and J.S. Florio in 1969-1970. Seibert and Florio developed
reduction principles for stability (see Theorem 3.4
in~\cite{SeiFlo95}) and asymptotic stability (see Theorem~4.13 and
Corollary~4.11 in~\cite{SeiFlo95}) for dynamical systems on metric
spaces assuming that $\Gamma$ is compact. Their conditions first
appeared in~\cite{Sei69} and \cite{Sei70}, while the proofs are found
in~\cite{SeiFlo95} (see also the work in~\cite{Kal99} for related
results). 

The reduction problem arises in many areas of nonlinear control
theory, including the stability of cascade-connected systems, the
separation principle of output feedback control, and the adaptive
control problem. It also plays a role in singular perturbations and
center manifold theory. The theorems below, which extend Seibert and
Florio's results in the finite dimensional setting, are relevant to
all these problems. We omit all proofs in this shortened paper, but
refer the interested reader to the full version~\cite{ElHMag09_2} 
and~\cite{ElHMag09_3}. Consider the dynamical system
\begin{equation}
\label{eq:Sigma2}
\Sigma: \ \dot x = f(x), \ x \in \cX,
\end{equation}
with $f$ locally Lipschitz on $\cX$, and let $\Gamma$ and $\cO$,
$\Gamma \subset \cO \subset \cX$, be closed sets which are positively
invariant for system $\Sigma$. We have the following

\begin{thm}[Reduction principle for
    attractivity]\label{thm:reduction:attractivity} Let $\Gamma$
  and $\cO$, $\Gamma \subset \cO \subset \cX$, be two closed positively
  invariant sets. Then, $\Gamma$ is attractive if the following
  conditions hold:

\begin{enumerate}[(i)]
\item $\Gamma$ is asymptotically stable relative to $\cO$

\item $\cO$ is locally attractive near $\Gamma$,

\item there exists a neighbourhood $\cN(\Gamma)$ such that, for all
  initial conditions in $\cN(\Gamma)$, the associated solutions are
  bounded and such that the set $\cl(\phi(\Re^+,\cN(\Gamma))) \cap \cO
  $ is contained in the domain of attraction of $\Gamma$ relative to
  $\cO$.

\end{enumerate}
The set $\Gamma$ is globally attractive if:

\begin{enumerate}[(i)']
\item $\Gamma$ is globally asymptotically stable relative to
  $\cO$,

\item $\cO$ is a global attractor,

\item all trajectories in $\cX$ are bounded.
\end{enumerate}
\end{thm}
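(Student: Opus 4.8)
The plan is to treat both the local and the global statement through a single mechanism: for a bounded trajectory, convergence to $\Gamma$ is equivalent to its positive limit set being contained in $\Gamma$, so the whole problem reduces to locating limit sets. I would fix an initial condition $x_0$ in the relevant neighbourhood $\cN(\Gamma)$ (which for the global statement is all of $\cX$) and set $M := \LL(x_0)$. Boundedness of the trajectory --- guaranteed by (iii) locally and by (iii)$'$ globally --- ensures that $M$ is nonempty, compact, and invariant, and that $\|\phi(t,x_0)\|_M \to 0$. The attractivity of $\cO$ (conditions (ii)/(ii)$'$) forces $\phi(t,x_0) \to \cO$, whence $M \subset \cO$ because $\cO$ is closed. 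In the local case I would additionally invoke (iii) to note $M \subset \cl(\phi(\Re^+,\cN(\Gamma))) \cap \cO$, which places $M$ inside the domain of attraction of $\Gamma$ relative to $\cO$; in the global case (i)$'$ supplies this directly, since that relative domain of attraction is all of $\cO$. Once I know $M \subset \Gamma$, the inequality $\|\phi(t,x_0)\|_\Gamma \le \|\phi(t,x_0)\|_M \to 0$ finishes the argument, and since $x_0$ is arbitrary this yields semi-attractivity (resp.\ global attractivity).

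The crux is therefore the following self-contained claim: a compact invariant set $M \subset \cO$, contained in the domain of attraction of $\Gamma$ relative to $\cO$, with $\Gamma$ stable relative to $\cO$, must satisfy $M \subset \Gamma$. To prove it I would argue pointwise. Fix $p \in M$ and $\varepsilon>0$; relative stability yields a neighbourhood $\cN(\Gamma)$ with $\phi(\Re^+,\cN(\Gamma)\cap\cO) \subset B_\varepsilon(\Gamma)$. Because $M$ is invariant and compact, the backward orbit of $p$ stays in $M$, so its $\alpha$-limit set $\LLm(p)$ is a nonempty subset of $M \subset \cO$; pick $q \in \LLm(p)$. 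Since $q \in \cO$ lies in the relative domain of attraction, $\phi(T,q) \in \cN(\Gamma)$ for some $T \ge 0$, and by openness of $\cN(\Gamma)$ and continuity of the time-$T$ map there is a neighbourhood $W$ of $q$ with $\phi(T,W) \subset \cN(\Gamma)$. The defining property of $\LLm(p)$ gives $t_n \to +\infty$ with $\phi(-t_n,p) \to q$, so $\phi(-t_n,p) \in W$ for large $n$, and then $\phi(T-t_n,p) \in \cN(\Gamma) \cap \cO$ (the membership in $\cO$ coming from two-sided invariance of $M$). Applying relative stability and flowing forward for time $t_n - T \ge 0$ returns exactly to $\phi(0,p)=p$, giving $p \in B_\varepsilon(\Gamma)$. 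As $\varepsilon$ is arbitrary and $\Gamma$ is closed, $p \in \Gamma$.

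The step I expect to be the main obstacle is exactly this pullback, because $t_n \to +\infty$ rules out any naive use of continuous dependence on initial conditions over the long interval $[0,t_n]$. The point of routing through the $\alpha$-limit point $q$ is to separate the two time scales: continuity of the \emph{fixed}-time map $\phi(T,\cdot)$ deposits the nearby points $\phi(-t_n,p)$ into the stability neighbourhood $\cN(\Gamma)\cap\cO$, after which the \emph{long} forward flow of duration $t_n-T$ is controlled not by continuity but by relative stability, which is uniform in time by definition. Two bookkeeping points I would verify carefully are that $\phi(T-t_n,p)$ genuinely lies in $\cO$ (it lies in $M$ by invariance, and $M\subset\cO$) and that for the local statement the limit set $M$ is indeed trapped in the closure appearing in hypothesis (iii). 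Everything else --- nonemptiness and invariance of $\LL(x_0)$ and $\LLm(p)$ for bounded orbits, and the distance inequality relating $\|\cdot\|_\Gamma$ and $\|\cdot\|_M$ --- is standard.
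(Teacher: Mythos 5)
Your proposal is correct and follows essentially the same route as the paper's proof: both reduce the problem to showing $\LL(x_0)\subset\Gamma$ for the (nonempty, compact, invariant) positive limit set, both pass to the $\alpha$-limit set of a point of $\LL(x_0)$ inside $\cO$ and the relative domain of attraction, and both then invoke relative stability of $\Gamma$ --- uniform over all positive times --- to flow backward-orbit points forward and conclude. The only variation is in one step: the paper argues by contradiction and extracts a point of $\LLm(\omega)$ lying on $\Gamma$ itself so that the backward orbit enters the stability neighbourhood directly, whereas you take an arbitrary $\alpha$-limit point $q$ and push a neighbourhood of it into $\cN(\Gamma)$ via continuity of the fixed-time map $\phi(T,\cdot)$ --- a minor but clean alternative to the same effect.
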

Conditions (ii) and (ii') above are also
necessary. Theorem~\ref{thm:reduction:attractivity} is novel in that
Seibert and Florio did not investigate a reduction principle for
attractivity.

\begin{thm}[Reduction principle for asymptotic
    stability]\label{thm:reduction:asymptotic_stability:unbounded} Let
  $\Gamma$ and $\cO$, $\Gamma \subset \cO \subset \cX$, be two closed
  positively invariant sets. Then, $\Gamma$ is [globally]
  asymptotically stable if the following conditions hold:

\begin{enumerate}[(i)]
\item $\Gamma$ is [globally] asymptotically stable relative to
  $\cO$,

\item $\cO$ is locally stable near $\Gamma$,

\item $\cO$ is locally attractive near $\Gamma$  [$\cO$ is
  globally attractive], 

\item if $\Gamma$ is unbounded, then $\Sigma$ is locally uniformly
  bounded near $\Gamma$, 

\item{} [all trajectories of $\Sigma$ are bounded.]
\end{enumerate}
\end{thm}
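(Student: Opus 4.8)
The plan is to establish the two constituents of semi-asymptotic stability separately—stability and semi-attractivity of $\Gamma$ for $\Sigma$—and then conclude by Definition~\ref{defn:set-stab}(v). First I would obtain stability of $\Gamma$ directly from the reduction principle for stability proved in Appendix~\ref{sec:reduction:stability}. Its hypotheses are exactly the relative stability of $\Gamma$ contained in condition~(i), the local stability of $\cO$ near $\Gamma$ in condition~(ii), and, when $\Gamma$ is unbounded, the local uniform boundedness in condition~(iv); when $\Gamma$ is compact, (iv) is automatic by Remark~\ref{rem:LUB:compactGamma}. The same reduction principle, applied to the global forms of~(i) and~(ii), yields global stability.

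Next I would derive semi-attractivity of $\Gamma$ by invoking the reduction principle for semi-attractivity, Theorem~\ref{thm:reduction:attractivity}. Its hypothesis~(i) is our condition~(i), and its hypothesis~(ii) (local semi-attractivity of $\cO$ near $\Gamma$) is our condition~(iii), so it remains only to verify its technical hypothesis~(iii): a neighbourhood $\cN(\Gamma)$ on which all solutions are bounded and for which $\cl(\phi(\Re^+,\cN(\Gamma)))\cap\cO$ lies in the domain of attraction of $\Gamma$ relative to $\cO$. The boundedness is supplied by a bounded positively invariant neighbourhood when $\Gamma$ is compact (Remark~\ref{rem:LUB:compactGamma}), and by condition~(iv) when $\Gamma$ is unbounded: choosing $\cN(\Gamma)=\bigcup_{x\in\Gamma}B_{\lambda(x)}(x)$ intersected with the stability neighbourhood from the first step confines each solution to some ball $B_{m(x)}(x)$.

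The crux is the relative-domain-of-attraction containment. Using the stability just proved, I would shrink $\cN(\Gamma)$ so that $\phi(\Re^+,\cN(\Gamma))\subset B_\varepsilon(\Gamma)$. Given $p\in\cl(\phi(\Re^+,\cN(\Gamma)))\cap\cO$, the positive invariance of $\cO$ and of $\cl(\phi(\Re^+,\cN(\Gamma)))$ keeps $\phi(t,p)$ inside $\cO$ and within $\varepsilon$ of $\Gamma$; boundedness makes $\LL(p)$ a nonempty compact invariant set, and the local semi-attractivity of $\cO$ from condition~(iii) forces $\LL(p)\subset\cO$. It then suffices to show that this compact invariant subset of $\cO$, lying close to $\Gamma$, is actually contained in $\Gamma$, whence $\phi(t,p)\to\Gamma$. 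For this I would use the relative semi-asymptotic stability in condition~(i): a compact invariant set every point of which is attracted to the relatively stable set $\Gamma$ must be contained in $\Gamma$, which is argued through the relative stability applied to the backward (alpha-limit) structure of $\LL(p)$. I expect this to be the main obstacle, because in the non-compact setting the relative domain of attraction need not contain a tube $B_\rho(\Gamma)\cap\cO$, so the containment cannot be read off from a single uniform radius and must be established orbit-by-orbit through limit sets; condition~(iv) is precisely what restores enough compactness to run the limit-set argument.

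Finally, the global version is lighter. By the remark following Theorem~\ref{thm:reduction:attractivity}, under the stronger hypothesis~(i) in its global form one may replace the technical hypothesis~(iii) of that theorem by the mere boundedness of trajectories, which is our condition~(v); together with condition~(iii) in its global form (\,$\cO$ a global attractor\,) this yields global attractivity of $\Gamma$. Combined with the global stability obtained in the first step, this gives global semi-asymptotic stability, completing the proof.
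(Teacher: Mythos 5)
Your overall architecture mirrors the paper's: stability from the appendix reduction principle (Theorem~\ref{thm:reduction:stability:unbounded}), then semi-attractivity from Theorem~\ref{thm:reduction:attractivity} after verifying its technical hypothesis (iii), and the global case handled exactly as in the paper. The gap is in the crux step, and it is genuine. After shrinking $\cN(\Gamma)$ so that $\phi(\Re^+,\cN(\Gamma))\subset B_\varepsilon(\Gamma)$ for a \emph{single uniform} $\varepsilon$, you take $p\in\cl(\phi(\Re^+,\cN(\Gamma)))\cap\cO$ and argue $\LL(p)\subset\Gamma$ on the grounds that ``a compact invariant set every point of which is attracted to the relatively stable set $\Gamma$ must be contained in $\Gamma$.'' But nothing you have established shows that the points of $\LL(p)$ (or $p$ itself) are attracted to $\Gamma$ relative to $\cO$: they are merely points of $\cO$ within distance $\varepsilon$ of $\Gamma$, and for unbounded $\Gamma$ the relative domain of attraction $N$ need not contain any tube $B_\rho(\Gamma)\cap\cO$ of uniform radius, however small (this is precisely the non-compact pathology the paper emphasizes). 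So membership of $\LL(p)$ in $N$ is exactly the containment you are trying to prove, and your alpha-limit argument presupposes it; the reasoning is circular. Saying that condition (iv) ``restores enough compactness'' does not repair this: compactness of $\LL(p)$ by itself does not place it in $N$, since $N$ may exclude points of $\cO$ arbitrarily close to $\Gamma$ far out along $\Gamma$.

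What closes the gap, and what the paper actually does, is to make $\varepsilon$ depend on the base point of $\Gamma$ rather than be uniform. For each $x\in\Gamma$, condition (iv) gives $\lambda(x)$, $m(x)$ with $\phi(\Re^+,B_{\lambda(x)}(x))\subset B_{m(x)}(x)$; one then chooses $\varepsilon(x)>0$ so small that the \emph{compact} set $\cl\bigl(B_{\varepsilon(x)}(\Gamma)\cap B_{m(x)}(x)\bigr)\cap\cO$ lies inside $N$ --- possible because this set shrinks into $\Gamma$ as $\varepsilon(x)\to 0$, even though no uniform tube around $\Gamma$ fits inside $N$. The stability of $\Gamma$ (already proved in your first step) gives a neighbourhood $\cN_x(\Gamma)$ with $\phi(\Re^+,\cN_x(\Gamma))\subset B_{\varepsilon(x)}(\Gamma)$, and the neighbourhood $U=\bigcup_{x\in\Gamma}B_{\lambda(x)}(x)\cap\cN_x(\Gamma)$ then satisfies hypothesis (iii) of Theorem~\ref{thm:reduction:attractivity} directly: every solution starting in $U$ is bounded and confined to $B_{\varepsilon(x)}(\Gamma)\cap B_{m(x)}(x)$ for its own $x$, so the closure of its orbit meets $\cO$ inside $N$. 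No limit-set argument is needed at this stage; the alpha-limit argument you sketch belongs inside the proof of Theorem~\ref{thm:reduction:attractivity}, where hypothesis (iii) is already available. (Your uniform-$\varepsilon$ argument would be fine for compact $\Gamma$, where $N$ does contain a uniform relative tube, but that case is covered by Seibert and Florio; the unbounded case is the substance of this theorem.)
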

Conditions (i), (ii), and (iii) above are necessary. 
\begin{thm}[Reduction principle for stability]
\label{thm:reduction:stability:unbounded}
Let $\Gamma$ and $\cO$, $\Gamma \subset \cO \subset \cX$, be two
closed positively invariant sets. If assumptions (i), (ii), and (iv)
of Theorem~\ref{thm:reduction:asymptotic_stability:unbounded} hold,
then $\Gamma$ is stable.
\end{thm}

If $\Gamma$ is a compact set, then
Theorems~\ref{thm:reduction:asymptotic_stability:unbounded}
and~\ref{thm:reduction:stability:unbounded} are equivalent to the
results presented in Theorems 3.4, 4.13, and Corollary 4.11
in~\cite{SeiFlo95}.

\section{Detectability}\label{sec:detectability}

For convenience, we repeat the definition of the set $\cO$ given in
Section~\ref{sec:problems}
\begin{defn}[Set $\cO$]\label{defn:O}
Given the control system~\eqref{eq:sys}, we denote by $\cO$ the
maximal set contained in $h^{-1}(0)$ which is invariant for the
open-loop system $\dot x = f(x)$.
\end{defn}
When system~\eqref{eq:sys} is linear time-invariant (LTI), the set
$\cO$ is the unobservable subspace. As discussed in
Section~\ref{sec:problems}, as long as the trajectories of the
closed-loop system in a neighbourhood of $\Gamma$ are bounded, a
passivity-based feedback renders the set $\cO$ attractive.  In
order to guarantee asymptotic stability of $\Gamma \subset \cO$,
the reduction principle in
Theorem~\ref{thm:reduction:asymptotic_stability:unbounded} suggests
that $\Gamma$ should be asymptotically stable relative to $\cO$
for the open-loop system. We call this property $\Gamma$-detectability.
\begin{defn}[$\Gamma$-detectability]\label{defn:gamma-detect}
System \eqref{eq:sys} is {\em locally $\Gamma$-detectable} if $\Gamma$
is asymptotically stable relative to $\cO$ for the open-loop
system. The system is {\em $\Gamma$-detectable} if $\Gamma$ is
globally asymptotically stable relative to $\cO$ for the
open-loop system.
\end{defn}
Our notion of detectability is parameterized by $\Gamma$, and not by
$\cO$, although the set $\cO$ figures in its definition. This is due
to the fact that $\cO$ is entirely determined by the open-loop vector
field $f$ and the output function $h$. In the case of LTI systems,
when $\Gamma = \{0\}$, the above definition requires that all
trajectories on the unobservable subspace $\cO$ converge to
$0$. Therefore, in the LTI setting, $\Gamma$-detectability coincides
with the classical notion of detectability.  Further, the notion of
$\Gamma$-detectability generalizes that of zero-state detectability
in~\cite{ByrIsiWil91}. As a matter of fact, when $V$ is positive
definite, and thus $\Gamma = \{0\}$, the two detectability notions
coincide.
\begin{lem}\label{lem:gamma_detect-zsd}
If $V$ is positive definite and $\Gamma=V^{-1}(0)=\{0\}$, then the
following three conditions are equivalent:

\begin{enumerate}[(a)]
\item System~\eqref{eq:sys} is locally zero-state detectable
  [zero-state detectable], 

\item the equilibrium $x=0$ is [globally] attractive relative to $\cO$ for the
  open-loop system,

\item system~\eqref{eq:sys} is locally $\Gamma$-detectable
  [$\Gamma$-detectable].
\end{enumerate}
\end{lem}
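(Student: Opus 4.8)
The plan is to funnel all three conditions through one elementary observation about the set $\cO$, after which each equivalence falls out of the definitions, with the passivity structure invoked only once.

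\emph{Key observation.} By Definition~\ref{defn:O}, $\cO$ is the maximal invariant subset of $h^{-1}(0)$, so
$\cO = \{x_0 \in \cX : \phi(t,x_0) \in h^{-1}(0) \text{ for all } t \in \Re\}$; equivalently, for every $x_0$ the condition that $h(\phi(t,x_0))=0$ for all $t\in\Re$ holds if and only if $x_0 \in \cO$. Indeed, if the whole orbit through $x_0$ lies in $h^{-1}(0)$, then that orbit is an invariant subset of $h^{-1}(0)$, hence is contained in $\cO$, so $x_0\in\cO$; conversely, invariance of $\cO$ gives $\phi(t,x_0)\in\cO\subset h^{-1}(0)$ for all $t$. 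This is the device that converts the zero-state detectability hypothesis into a statement about initial conditions restricted to $\cO$.

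\emph{(a)} $\Leftrightarrow$ \emph{(b), and (c)} $\Rightarrow$ \emph{(b).} With the key observation, local zero-state detectability asserts precisely that there is a neighbourhood $U$ of $0$ such that every $x_0 \in U \cap \cO$ satisfies $\phi(t,x_0)\to 0$; this is verbatim the statement that $x=0$ is attractive relative to $\cO$, with $\cN(\{0\})=U$, and the global versions correspond under $U=\cX$. (The relative notions are well posed because $\{0\}=V^{-1}(0)\subset\cO$ by~\eqref{eq:inclusion}.) Moreover, \emph{(c)} $\Rightarrow$ \emph{(b)} is immediate, since semi-asymptotic stability relative to $\cO$ includes attractivity relative to $\cO$ by definition.

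\emph{(b)} $\Rightarrow$ \emph{(c).} This is the only step with genuine content: I must upgrade attractivity relative to $\cO$ to \emph{stability} relative to $\cO$, and here the passivity structure does the work. By~\eqref{eq:passivity}, $L_f V(x)\le 0$ for all $x\in\cX$, so $V$ is nonincreasing along open-loop solutions; since $V$ is positive definite with $V^{-1}(0)=\{0\}$, the classical Lyapunov stability theorem gives that $\{0\}$ is stable for $\Sigma$ on all of $\cX$. A fortiori $\{0\}$ is stable relative to $\cO$, because restricting initial conditions to $\cN(\{0\})\cap\cO$ only shrinks the reachable set. Combining this stability with the attractivity supplied by (b) yields semi-asymptotic stability relative to $\cO$, i.e. local $\Gamma$-detectability; in the global case the same local stability together with global attractivity relative to $\cO$ gives $\Gamma$-detectability.

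The main obstacle is conceptual rather than computational: attractivity alone never implies stability, so the crux is recognizing that positive definiteness of $V$ together with $L_fV\le 0$ furnishes Lyapunov stability \emph{for free}, independently of (b). A minor point to treat carefully in the key observation is completeness of solutions: invariance of $\cO$ should be read on the maximal interval of existence, consistently with Definition~\ref{defn:O} and with the two-sided condition ($t\in\Re$) appearing in Definitions~\ref{defn:zsd} and~\ref{defn:gamma-detect}.
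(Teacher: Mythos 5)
Your proof is correct and follows essentially the same route as the paper's: you characterize $\cO$ as the set of initial conditions whose output is identically zero to get (a) $\Leftrightarrow$ (b), and you use $L_fV\le 0$ together with positive definiteness of $V$ to get Lyapunov stability of the origin, which upgrades relative attractivity to relative semi-asymptotic stability for (b) $\Leftrightarrow$ (c). The only difference is presentational: you spell out the solution-completeness caveat and the trivial direction (c) $\Rightarrow$ (b) explicitly, which the paper leaves implicit.
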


\begin{proof}
The set of points $x_0 \in \cX$ such that the open-loop solution
satisfies $h(\phi(t,x_0)) \equiv 0$ is precisely the maximal open-loop
invariant subset of $h^{-1}(0)$, i.e., the set $\cO$. Thus, conditions
(a) and (b) are equivalent.  Since~\eqref{eq:sys} is passive,
by~\eqref{eq:passivity} we have $L_f V \leq 0$. By the assumption that
$V$ is positive definite, it follows that $x=0$ is a stable
equilibrium of the open-loop system. Thus, $x=0$ is [globally]
asymptotically stable relative to $\cO$ for the open-loop system if
and only if $x=0$ is [globally] attractive relative to $\cO$ for the
open-loop system, proving that conditions (b) and (c) are equivalent.
\end{proof}
The next lemma shows that $\Gamma$-detectability also encompasses the
notion of $V$-detectability in~\cite{Shi00_2}.

\begin{lem}\label{lem:gamma_detect-Vdetect}
If $\Gamma = V^{-1}(0)$ is a compact set, then the following three
conditions are equivalent:

\begin{enumerate}[(a)]

\item System~\eqref{eq:sys} is locally $V$-detectable,

\item the set $\Gamma$ is attractive relative to $\cO$ for the
  open-loop system,

\item system~\eqref{eq:sys} is locally $\Gamma$-detectable.
\end{enumerate}
Moreover, if $V$ is proper, then the global versions of conditions
(a)-(c) are equivalent.
\end{lem}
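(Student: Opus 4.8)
The plan is to prove the cycle (a) $\Rightarrow$ (b) $\Rightarrow$ (c) $\Rightarrow$ (a), using throughout the observation already exploited in the proof of Lemma~\ref{lem:gamma_detect-zsd}: the hypothesis $h(\phi(t,x_0)) = 0$ for all $t \in \Re$ holds precisely when $x_0 \in \cO$, since $\cO$ is the maximal open-loop invariant subset of $h^{-1}(0)$. With this reduction, local $V$-detectability (Definition~\ref{defn:Vdet}) becomes the statement that there is $c>0$ with $V(\phi(t,x_0)) \to 0$ for every $x_0 \in V^{-1}([0,c]) \cap \cO$, whereas (b) and (c) concern convergence of such solutions to $\Gamma$ in the metric sense.

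First I would dispose of the equivalence (b) $\iff$ (c). Passivity gives $L_f V \le 0$ via~\eqref{eq:passivity}, so $V$ is nonincreasing along open-loop solutions. Since $\Gamma = V^{-1}(0)$ is compact and $V$ is continuous and strictly positive off $\Gamma$, the quantity $m(\varepsilon) := \min\{V(x) : \|x\|_\Gamma = \varepsilon\}$ is attained and strictly positive for each small $\varepsilon>0$; a standard Lyapunov argument, in which a solution starting in $\{V < m(\varepsilon)\} \cap B_\varepsilon(\Gamma)$ cannot cross the sphere $\|x\|_\Gamma = \varepsilon$ where $V \ge m(\varepsilon)$, then shows $\Gamma$ is stable for $\Sigma$, hence stable relative to $\cO$. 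Consequently the semi-asymptotic stability relative to $\cO$ defining local $\Gamma$-detectability in Definition~\ref{defn:gamma-detect} reduces to mere semi-attractivity relative to $\cO$, which is exactly condition (b). This settles (b) $\iff$ (c) with no further work.

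The substance is thus (a) $\iff$ (b), which I would base on the following energy–metric dictionary: along a \emph{bounded} solution lying in $\cO$, one has $V(\phi(t,x_0)) \to 0$ if and only if $\|\phi(t,x_0)\|_\Gamma \to 0$. One direction is immediate from uniform continuity of $V$ on a compact neighbourhood of $\Gamma$ (small distance to $\Gamma$ forces small $V$, as $V|_\Gamma=0$); the converse uses compactness of $\Gamma = V^{-1}(0)$ through a limit-point argument, since any subsequence $\phi(t_n,x_0)$ staying a fixed distance $\delta>0$ from $\Gamma$ has, by boundedness, a convergent sub-subsequence with limit $p$ satisfying $V(p)=0$ yet $\|p\|_\Gamma \ge \delta$, a contradiction. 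For (a) $\Rightarrow$ (b), stability supplies a radius $\delta$ with $B_\delta(\Gamma) \subset V^{-1}([0,c))$ along which solutions remain bounded, so the dictionary upgrades $V \to 0$ to convergence to $\Gamma$; for (b) $\Rightarrow$ (a), one chooses $c < m(\delta)$ so that solutions issuing from the sublevel set remain in $B_\delta(\Gamma) \cap \cO$ and hence lie in the basin furnished by (b).

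The main obstacle is exactly the boundedness required by the limit-point half of the dictionary. Locally it is provided by the stability of $\Gamma$ established above, which confines solutions started near $\Gamma$ to a compact neighbourhood; the delicate point is ensuring that the relevant portion of the sublevel set $V^{-1}([0,c]) \cap \cO$ lies in that neighbourhood, which is nontrivial when $\cX$ is unbounded because $V$ may decay along $\cO$ at infinity. In the global case this difficulty disappears: properness of $V$ renders every sublevel set $V^{-1}([0,c])$ compact, so all solutions under consideration are automatically bounded and the dictionary applies verbatim, yielding the stated equivalence of the global versions of (a)--(c).
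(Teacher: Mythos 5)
Your decomposition and most of the individual steps coincide with the paper's own proof. The paper likewise derives stability of $\Gamma$ for the open-loop system from $L_f V \le 0$ together with compactness of $\Gamma = V^{-1}(0)$ (so that (b) and (c) differ only by this ``free'' stability property), proves (a) $\Rightarrow$ (b) by the same compactness dictionary ($V(\phi(t,x_0)) \to 0$ forces $\phi(t,x_0) \to V^{-1}(0)$ for solutions confined near the compact zero set), and disposes of the global case exactly as you do, via properness of $V$.

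The genuine gap is the one you flag and then leave open: the local implication (b)/(c) $\Rightarrow$ (a). Local $V$-detectability quantifies over \emph{every} $x_0 \in V^{-1}([0,c]) \cap \cO$, and your choice $c < m(\delta)$ does not confine $V^{-1}([0,c])$ to $B_\delta(\Gamma)$: when $V$ is not proper, the sublevel set $V^{-1}([0,c])$ may contain points arbitrarily far from $\Gamma$ (for instance if $V$ decays to zero at infinity along $\cO$), neither (b) nor (c) says anything about the solutions through such points, and your dictionary is unavailable there because it needs boundedness. So as written you have established local (a) $\Rightarrow$ (b) $\Leftrightarrow$ (c) and the global equivalence, but not local (c) $\Rightarrow$ (a). It is worth knowing how the paper closes this step: with the single assertion that, since $\Gamma = V^{-1}(0)$ is compact and $V$ is continuous, there exists $c>0$ with $V^{-1}([0,c]) \subset S$, where $S$ is the neighbourhood furnished by local $\Gamma$-detectability. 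That assertion is precisely the statement you declined to make, and your hesitation is well founded: continuity of $V$ and compactness of $V^{-1}(0)$ alone do not deliver it, since small sublevel sets of a non-proper $V$ can escape to infinity. In other words, the step you could not complete is a step the paper's proof does not genuinely justify either; closing it requires an additional hypothesis (properness of $V$, or compactness of some sublevel set $V^{-1}([0,c])$), or else a reformulation of local $V$-detectability in which the quantifier ranges over $V^{-1}([0,c])$ intersected with a neighbourhood of $\Gamma$.
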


\begin{proof}
Suppose that~\eqref{eq:sys} is locally $V$-detectable. Then, for
all $x_0 \in V^{-1}([0,c]) \cap \cO$, we have $V(x(t)) \to 0$. Since
$V^{-1}(0)$ is compact, in a sufficiently small neighbourhood of
$\Gamma$, $V^{-1}(\phi(t,x_0)) \to 0$ implies $\phi(t,x_0) \to
V^{-1}(0)$, and thus $\Gamma = V^{-1}(0)$ is attractive relative to
$\cO$ for the open-loop system, showing that condition (a) implies
(b).  Since $L_f V \leq 0$, $\Gamma$ is also stable for the open-loop
system. Thus, condition (b) implies (c).  Now suppose
that~\eqref{eq:sys} is locally $\Gamma$-detectable. Then,
there exists a neighbourhood $S$ of $\Gamma$ such that, for all $x_0
\in S \cap \cO$, $\phi(t,x_0) \to \Gamma$.  Since $\Gamma=V^{-1}(0)$
is compact and $V$ is continuous, there exists $c>0$ such that
$V^{-1}([0,c]) \subset S$. Hence, for all $x_0 \in V^{-1}([0,c]) \cap
\cO$ or, equivalently for all $x_0 \in V^{-1}([0,c])$ such that
$h(\phi(t,x_0))\equiv 0$, we have $\phi(t,x_0) \to V^{-1}(0)$. By the
continuity of $V$ and the compactness of $V^{-1}(0)$ the latter fact
implies that $V(\phi(t,x_0)) \to 0$. This proves
that condition (c) implies (a).
The proof of equivalence of the global notions of detectability
follows directly from the fact that if $V$ is proper, then
$V(\phi(t,x_0)) \to 0$ if and only if $\phi(t,x_0) \to V^{-1}(0)$.
\end{proof}
We now give sufficient conditions for~\eqref{eq:sys} to be
$\Gamma$-detectable. The proof is in~\cite{ElHMag09_2}. Let
\[
\SS = \{x \in \mathcal{X}: L_f^m h(x) =0, 0 \leq m \leq
  r+n-2\}.
\]
Notice that the definition of $\SS$ does not directly involve the
storage function (but recall that $h\trans = L_g V$, so it does
indirectly depend on $V$).

\begin{prop}
\label{prop:Gamma-detect:sufficient_conditions}
Suppose that all open-loop trajectories that originate and remain on
$\SS$ are bounded and that the open-loop system in~\eqref{eq:sys} is
locally uniformly bounded near $\Gamma$.  If
\begin{equation}\label{eq:Gamma-detect:sufficient_conditions}
\SS \cap \JJ(\SS,\SS)  \subset \Gamma,
\end{equation}
then system~\eqref{eq:sys} is $\Gamma$-detectable. Moreover, if
$\Gamma=V^{-1}(0)$, then
condition~\eqref{eq:Gamma-detect:sufficient_conditions} may be
replaced by the following one:
\begin{equation}\label{eq:Gamma-detect:sufficient_conditions:2}
\SS \cap \LL(\SS) \subset V^{-1}(0).
\end{equation}
\end{prop}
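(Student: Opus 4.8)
The plan is to establish $\Gamma$-detectability by verifying the hypotheses of the reduction principle for semi-asymptotic stability (Theorem~\ref{thm:reduction:asymptotic_stability:unbounded}) applied to the \emph{open-loop} system, where the roles are played by $\Gamma$ and $\cO$. Recall that $\Gamma$-detectability is precisely semi-asymptotic stability of $\Gamma$ relative to $\cO$ for $\dot x = f(x)$. However, the condition~\eqref{eq:Gamma-detect:sufficient_conditions} is phrased in terms of $S'$ rather than $\cO$ directly, so the first step is to relate these two sets. Since every point of $\cO$ generates a trajectory on which $h \equiv 0$, and hence $L_f^m h \equiv 0$ for all $m$, we have $\cO \subset S'$; conversely the relevant dynamics of interest are captured by $S'$ via Lemma~\ref{lem:S}, which gives $S' \cap \LL(X) = S \cap \LL(X)$ for any $X$.

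The core idea is to use the prolongational limit set characterization of \emph{uniform} attractivity from Proposition~\ref{prop:Jplus-uniform_attraction}. First I would argue that the relevant limiting behaviour of trajectories starting on $S'$ is confined to $\Gamma$: by the boundedness assumption on trajectories originating and remaining on $S'$, together with local uniform boundedness near $\Gamma$, Proposition~\ref{prop:Jplus-uniform_attraction} (applied with $U = S'$) lets me translate the hypothesis $S' \cap \JJ(S',S') \subset \Gamma$ into the statement that $\Gamma$ is a uniform semi-attractor relative to $S'$. Then Lemma~\ref{lem:UA-iff-SAS} upgrades uniform semi-attractivity to semi-asymptotic stability relative to $S'$. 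Because the open-loop passive dynamics satisfy $L_f V \leq 0$, the storage function is nonincreasing, which supplies the stability of $\Gamma$ (a point of $V^{-1}(0)$ is a minimum), and on $\cO \subset S'$ the trajectory behaviour relative to $\cO$ coincides with that relative to $S'$ in the neighbourhood of interest, giving semi-asymptotic stability relative to $\cO$, i.e.\ $\Gamma$-detectability.

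For the second claim, where $\Gamma = V^{-1}(0)$ and the simpler condition~\eqref{eq:Gamma-detect:sufficient_conditions:2} is used, I would exploit that on $S' \cap \LL(S')$ the storage function is constant along trajectories (as in the proof of Lemma~\ref{lem:S}, $V(\phi(t,\bar x))$ is constant on limit sets), so that $\LL(S') \subset S'$ forces limiting trajectories into $V^{-1}(0) = \Gamma$. The hypothesis $S' \cap \LL(S') \subset V^{-1}(0)$ then says exactly that the ordinary $\omega$-limit sets of $S'$-trajectories lie in $\Gamma$; combined with the boundedness and local uniform boundedness assumptions, this yields attractivity of $\Gamma$ relative to $\cO$, and again stability comes for free from $L_f V \leq 0$. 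The passage from ordinary limit-set containment to the prolongational version needed for uniform attractivity is where local uniform boundedness does its work, via Proposition~\ref{prop:Jplus-uniform_attraction}.

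The main obstacle I anticipate is the bookkeeping around the distinction between $\cO$ and $S'$ and between ordinary and prolongational limit sets. Specifically, the hypothesis controls $\JJ(S',S')$ (a prolongation) but $\Gamma$-detectability concerns trajectories in $\cO$; I must ensure that the inclusion $\cO \subset S'$ together with invariance of $\cO$ lets me restrict the uniform-attractivity conclusion from $S'$ down to $\cO$ without losing the neighbourhood structure. A subtle point is that Proposition~\ref{prop:Jplus-uniform_attraction} requires local uniform boundedness near $\Gamma$ and produces \emph{uniform} semi-attractivity, which is stronger than what the definition of $\Gamma$-detectability demands; this is not a problem (stronger implies weaker) but I must confirm the neighbourhoods align so that the conclusion genuinely holds relative to $\cO$ and not merely relative to $S'$. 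Verifying that the boundedness of $S'$-trajectories plus $L_f V \le 0$ indeed delivers both the nonemptiness of the prolongational limit sets and their containment in $\Gamma$ is the technically delicate heart of the argument.
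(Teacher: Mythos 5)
There are two genuine gaps, and both sit exactly where you flagged the ``delicate heart'' but did not carry out the argument. First, your central step --- applying Proposition~\ref{prop:Jplus-uniform_attraction} with $U=S'$ to convert hypothesis~\eqref{eq:Gamma-detect:sufficient_conditions} into uniform semi-attractivity of $\Gamma$ relative to $S'$ --- is invalid. That proposition requires a neighbourhood $\cN(\Gamma)$ with $\JJ(\cN(\Gamma),S')\subset\Gamma$, i.e.\ containment of the \emph{entire} prolongational limit set, whereas \eqref{eq:Gamma-detect:sufficient_conditions} only controls $S'\cap \JJ(S',S')$, the part lying inside $S'$. Since $S'$ is merely closed and \emph{not} invariant, trajectories starting on $S'$ can leave it, and their prolongational limit points outside $S'$ are unconstrained by the hypothesis. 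Your intermediate claim is in fact false: in the paper's own five-dimensional example, points of $S'$ with $x_{10}\neq 0$ arbitrarily close to $\Gamma=\{0\}$ generate unbounded trajectories (with $x_4(t)\to\infty$), so $\Gamma$ is not even a semi-attractor relative to $S'$, yet $S'\cap\JJ(S',S')=\{0\}$. The repair is to work with $\cO$ rather than $S'$: $\cO$ is closed and invariant with $\cO\subset S'$, hence $\JJ(\cO,\cO)\subset \cO\subset S'$ and $\JJ(\cO,\cO)\subset\JJ(S',S')$, so \eqref{eq:Gamma-detect:sufficient_conditions} gives $\JJ(\cO,\cO)\subset\Gamma$ directly, after which Proposition~\ref{prop:Jplus-uniform_attraction} (with $U=\cO$) and Lemma~\ref{lem:UA-iff-SAS} apply.

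Second, stability does \emph{not} ``come for free from $L_fV\le 0$.'' When $\Gamma\subsetneq V^{-1}(0)$, monotonicity of $V$ places no constraint on motion inside or near $V^{-1}(0)$ away from $\Gamma$ (this is precisely the pendulum/homoclinic phenomenon the paper uses as a counterexample), and even when $\Gamma=V^{-1}(0)$ is unbounded, a nonincreasing $V$ does not yield set stability of its zero level set. This gap is fatal for the second claim of the proposition: condition~\eqref{eq:Gamma-detect:sufficient_conditions:2} controls only ordinary limit sets, and the passage from $\LL(\cO)\subset\Gamma$ to $\JJ(\cO,\cO)\subset V^{-1}(0)$ is \emph{not} performed by local uniform boundedness plus Proposition~\ref{prop:Jplus-uniform_attraction} --- that proposition converts a $\JJ$-containment into uniform attractivity, never an $\LL$-containment into a $\JJ$-containment; indeed attractivity plus local uniform boundedness do not imply stability (Example~\ref{ex:attractivity}). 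The paper bridges this with a dedicated argument you would need to reproduce: establish attractivity via $\LL(\cO)\subset S'\cap\LL(S')\subset\Gamma$ (using Lemma~\ref{lem:S}), show each $\JJ(p,\cO)$ is compact (local uniform boundedness plus Proposition~\ref{prop:JJ}), and then prove $\JJ(\cO,\cO)\subset V^{-1}(0)$ by contradiction, playing the monotonicity of $V$ along open-loop trajectories against continuous dependence on initial conditions and the convergence $V(\phi(t,p))\to 0$ on $\cO$; only then, when $\Gamma\subsetneq V^{-1}(0)$, is \eqref{eq:Gamma-detect:sufficient_conditions} invoked once more (via $V^{-1}(0)\subset\cO\subset S'$) to conclude $\JJ(\cO,\cO)\subset\Gamma$. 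Without this storage-function argument, your proof establishes at best attractivity relative to $\cO$, not $\Gamma$-detectability.
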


\begin{rem}
Proposition~\ref{prop:Gamma-detect:sufficient_conditions} relaxes the
sufficient conditions for detectability found in~\cite[Proposition
  3.4]{ByrIsiWil91} and~\cite[Theorem 10]{Shi00_1}. We refer the
reader to~\cite{ElHMag09_2} for a discussion. The natural way to check
$\Gamma$-detectability is to compute the set $\cO$ in
Definition~\ref{defn:O}, and then assess the asymptotic stability
of $\Gamma$ relative to $\cO$. Should the computation of the set $\cO$
be too difficult,
Proposition~\ref{prop:Gamma-detect:sufficient_conditions} above
provides an alternative, but conservative, criterion for
$\Gamma$-detectability that may prove useful in some cases.  It is
important to notice that
condition~\eqref{eq:Gamma-detect:sufficient_conditions} may be hard to
check in practice because it involves the computation of the
prolongational limit set $J^+(\SS,\SS)$. The conditions found
in~\cite[Proposition 3.4]{ByrIsiWil91} and~\cite[Theorem 10]{Shi00_1}
suffer from the same limitation because they too involve the
computation of limit sets.
\end{rem}

\section{Solution of the set stabilization
  problem}\label{sec:solution_set_stabilization} 

We are now ready to solve the stabilization problem, by presenting
conditions that guarantee that a passivity-based controller of the
form~\eqref{eq:passivity-based_feedback} makes $\Gamma$ stable,
attractive, or asymptotically stable for the closed-loop
system. All results are straightforward consequences of the reduction
principles presented in Section~\ref{sec:problems}, and they rely on
the next fundamental observation, whose proof is found
in~\cite{ElHMag09_2}.

\begin{prop}\label{prop:passivity_implies_local_stability}
Consider the passive system~\eqref{eq:sys} with a passivity-based
feedback of the form~\eqref{eq:passivity-based_feedback}, and the set
$\cO$ in Definition~\ref{defn:O}.  Then, the set $\cO$ is locally
stable near $\Gamma$ for the closed-loop system.
\end{prop}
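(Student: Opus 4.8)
The plan is to run on a single engine---the fact, recorded in \eqref{eq:Vdot}, that the storage function $V$ is nonincreasing along every closed-loop trajectory---and to combine it with the inclusion $V^{-1}(0)\subset\cO$ from \eqref{eq:inclusion} together with two compactness arguments. Throughout I read the notion of local stability near $\Gamma$ from Definition~\ref{defn:local_stab} for the closed-loop flow $\phiu$: I fix $x\in\Gamma$, $c>0$, $\varepsilon>0$ and must produce $\delta>0$ so that any closed-loop solution starting in $B_\delta(\Gamma)$ stays in $B_\varepsilon(\cO)$ for as long as it remains in $B_c(x)$.

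The device that tames the possible non-compactness of $\Gamma$ and $\cO$ is to localize everything to the compact set $K:=\cl(B_c(x))$ supplied by the definition. On $K$ I would establish two facts by the same standard compactness-and-continuity contradiction. First, since $V$ is continuous and $\{y\in K: V(y)=0\}\subset V^{-1}(0)\subset\cO$, the sublevel sets of $V$ inside $K$ shrink into any neighbourhood of $\cO$: there is $\alpha>0$ with $\{y\in K: V(y)\le\alpha\}\subset B_\varepsilon(\cO)$ (otherwise a sequence $y_n\in K$ with $V(y_n)\to0$ and $\|y_n\|_\cO\ge\varepsilon$ would, by compactness of $K$ and continuity of $V$, accumulate at a point of $V^{-1}(0)\subset\cO$, a contradiction). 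Second, since $V$ vanishes on $\Gamma$, proximity to $\Gamma$ forces $V$ small on $K$: for the $\alpha$ just found there is $\delta>0$ such that $y\in K$ and $\|y\|_\Gamma<\delta$ imply $V(y)\le\alpha$ (again by a contradiction argument, using that the point-to-set distance $\|\cdot\|_\Gamma$ is continuous and $\Gamma$ is closed).

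With these two constants in hand the verification is immediate. Take $x_0\in B_\delta(\Gamma)$ and $t>0$ with $\phiu([0,t],x_0)\subset B_c(x)\subset K$. Evaluating the second fact at $s=0$ gives $V(x_0)\le\alpha$, since $x_0\in K$ and $\|x_0\|_\Gamma<\delta$. For every $s\in[0,t]$ monotonicity of $V$ along the closed-loop flow yields $V(\phiu(s,x_0))\le V(x_0)\le\alpha$, while $\phiu(s,x_0)\in B_c(x)\subset K$; hence the first fact places $\phiu(s,x_0)$ in $B_\varepsilon(\cO)$. Thus $\phiu([0,t],x_0)\subset B_\varepsilon(\cO)$, which is exactly local stability of $\cO$ near $\Gamma$.

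The main obstacle is conceptual rather than computational: because $V$ is only controlled near its zero set, the argument can guarantee only that trajectories stay near $V^{-1}(0)$, hence near $\cO$, and \emph{not} that they stay near $\Gamma$ itself---which is precisely why the statement asserts local stability of the larger set $\cO$, and why a genuine reduction principle (rather than a bare Lyapunov estimate) is needed afterwards to recover the properties of $\Gamma$. The only technical care is that both claims require $K=\cl(B_c(x))$ to be compact; this is the reason the definition quantifies over balls $B_c(x)$, and it is used in the same spirit as the compactness of $\cl(B_{\varepsilon}(\Gamma)\cap B_m(x))$ exploited in the proof of Theorem~\ref{thm:reduction:asymptotic_stability:unbounded}.
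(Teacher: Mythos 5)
Your proof is correct and takes essentially the same approach as the paper's: both localize to the compact set $\cl(B_c(x))$, trap a sublevel set of $V$ between a $\delta$-neighbourhood of $\Gamma$ and an $\varepsilon$-neighbourhood of $\cO$ (using the inclusions $\Gamma \subset V^{-1}(0) \subset \cO$ from \eqref{eq:inclusion}), and conclude from the monotonicity of $V$ along closed-loop solutions guaranteed by \eqref{eq:Vdot}. The only difference is cosmetic: the paper constructs its constants $v$ and $\delta$ by explicit minimization over compact slices involving $V^{-1}(0)$, whereas you obtain the corresponding constants $\alpha$ and $\delta$ by sequential-compactness contradiction arguments aimed directly at $\cO$ and $\Gamma$.
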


\begin{thm}[Asymptotic stability of
    $\Gamma$]\label{thm:semi-asymptotic_stability} Consider
  system~\eqref{eq:sys} with a passivity-based feedback of the
  form~\eqref{eq:passivity-based_feedback}.  If $\Gamma$ is compact,
  then
\begin{itemize}
\item $\Gamma$ is asymptotically stable for the closed-loop system
if, and only if, system~\eqref{eq:sys} is locally
  $\Gamma$-detectable,
\item if all trajectories of the
  closed-loop system are bounded, then $\Gamma$ is globally
  asymptotically stable for the closed-loop system if, and only if,
  system~\eqref{eq:sys} is $\Gamma$-detectable.
\end{itemize}
If $\Gamma$ is unbounded and the closed-loop system is locally
uniformly bounded near $\Gamma$, then
\begin{itemize}
\item $\Gamma$ is asymptotically stable for the
  closed-loop system if, and only if, system~\eqref{eq:sys} is locally
  $\Gamma$-detectable.
\item if all trajectories of the closed-loop system are bounded,
  $\Gamma$ is globally asymptotically stable for the closed-loop
  system if, and only if, system~\eqref{eq:sys} is
  $\Gamma$-detectable.
\end{itemize}

\end{thm}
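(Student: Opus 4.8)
The plan is to obtain the theorem as a direct application of the reduction principle for semi-asymptotic stability (Theorem~\ref{thm:reduction:asymptotic_stability:unbounded}) to the \emph{closed-loop} system, taking the two nested invariant sets to be $\Gamma$ and the set $\cO$ of Definition~\ref{defn:O}. The observation that makes this work is that on $\cO \subset h^{-1}(0)$ the feedback~\eqref{eq:passivity-based_feedback} vanishes, so the closed-loop vector field restricted to $\cO$ coincides with $f$; as already noted in Section~\ref{subsec:problems}, $\cO$ is invariant for the closed-loop system and the two flows agree on it. Consequently, condition (i) of Theorem~\ref{thm:reduction:asymptotic_stability:unbounded}---that $\Gamma$ be [globally] semi-asymptotically stable relative to $\cO$ for the closed-loop system---is \emph{identical} to the hypothesis of [local] $\Gamma$-detectability. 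This same identification disposes of the necessity half of each equivalence at once: if $\Gamma$ is [globally] (semi-)asymptotically stable for the closed-loop system, then restricting initial conditions to $\cO$ shows it is [globally] semi-asymptotically stable relative to $\cO$, which, because the two flows agree on $\cO$, is exactly [local] $\Gamma$-detectability.

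For the sufficiency half I would verify the remaining hypotheses of Theorem~\ref{thm:reduction:asymptotic_stability:unbounded} for the closed-loop system. Condition (ii), local stability of $\cO$ near $\Gamma$, is supplied verbatim by Proposition~\ref{prop:passivity_implies_local_stability}. For condition (iii)---local semi-attractivity of $\cO$ near $\Gamma$, respectively $\cO$ a global attractor---I would invoke the Byrnes--Isidori--Willems property recalled in Section~\ref{subsec:problems} and in~\cite{ByrIsiWil91}: for every $x_0$, the closed-loop positive limit set $\Lu(x_0)$ is open-loop invariant and contained in $h^{-1}(0)$, hence in $\cO$. Thus any \emph{bounded} closed-loop trajectory, having a nonempty compact $\omega$-limit set inside $\cO$, converges to $\cO$. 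In the unbounded statements, condition (iv) is precisely the standing hypothesis of local uniform boundedness near $\Gamma$; in the global statements, the assumption that all trajectories are bounded is condition (v) and, together with the Byrnes--Isidori--Willems property, makes $\cO$ a global attractor, which is condition (iii)$'$.

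The step requiring the most care---and the one I expect to be the main obstacle---is supplying the boundedness needed to run the Byrnes--Isidori--Willems argument for condition (iii) when $\Gamma$ is \emph{compact}, since there local uniform boundedness is not an explicit assumption. I would bootstrap it: from $\Gamma$-detectability, $\Gamma$ is stable relative to $\cO$, and Proposition~\ref{prop:passivity_implies_local_stability} gives that $\cO$ is locally stable near $\Gamma$; feeding these two facts into the Seibert--Florio stability reduction principle (valid for compact $\Gamma$, see~\cite{SeiFlo95}) shows that $\Gamma$ is stable for the closed-loop system. Since $\Gamma$ is then compact and stable, Remark~\ref{rem:LUB:compactGamma} yields a compact positively invariant neighbourhood of $\Gamma$, so that nearby trajectories are bounded and the Byrnes--Isidori--Willems property delivers condition (iii); note there is no circularity, as stability is established from (i) and (ii) alone. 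With (i)--(iv) [and (v) in the global case] verified, Theorem~\ref{thm:reduction:asymptotic_stability:unbounded} concludes that $\Gamma$ is [globally] semi-asymptotically stable for the closed-loop system; for compact $\Gamma$ this coincides, by the Remark following Definition~\ref{defn:set-stab}, with [global] asymptotic stability, completing all four cases.
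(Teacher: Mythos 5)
Your proposal is correct and follows essentially the same route as the paper's proof: both verify the hypotheses of Theorem~\ref{thm:reduction:asymptotic_stability:unbounded} for the closed-loop system with the pair $(\Gamma,\cO)$, using Proposition~\ref{prop:passivity_implies_local_stability} for local stability of $\cO$, the Byrnes--Isidori--Willems limit-set property for (local or global) attractivity of $\cO$, a stability reduction principle to bootstrap boundedness of nearby trajectories in the compact case, and the coincidence of open- and closed-loop dynamics on $\cO$ for necessity. The only cosmetic difference is that you invoke Seibert--Florio's original compact-case stability theorem where the paper cites its own Theorem~\ref{thm:reduction:stability:unbounded}; the two are interchangeable at that step.
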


\begin{proof}
The sufficiency part of the theorem follows from the following
considerations.  By
Proposition~\ref{prop:passivity_implies_local_stability}, $\cO$ is
locally stable near $\Gamma$. If $\Gamma$ is compact, by
Theorem~\ref{thm:reduction:stability:unbounded} local
$\Gamma$-detectability implies stability of $\Gamma$. The stability
of $\Gamma$ and its compactness in turn imply that all closed-loop
trajectories in some neighbourhood of $\Gamma$ are bounded. Since all
bounded trajectories asymptotically approach $\cO$, $\cO$ is locally
attractive near $\Gamma$. If all trajectories of the closed-loop
system are bounded, then $\cO$ is globally attractive.
Theorem~\ref{thm:reduction:asymptotic_stability:unbounded} yields the
required result.

Now suppose that $\Gamma$ is unbounded. By local uniform boundedness
near $\Gamma$ we have that all closed-loop solutions in some
neighbourhood of $\Gamma$ are bounded and hence $\cO$ is locally
attractive near $\Gamma$. Once again, if all closed-loop
trajectories are bounded, then $\cO$ is globally attractive.  The
required result now follows from
Theorem~\ref{thm:reduction:asymptotic_stability:unbounded}.

The various necessity statements follow from the following basic
observation. Any passivity-based feedback of the
form~\eqref{eq:passivity-based_feedback} makes $\cO$ an invariant set
for the closed-loop system (see
Section~\ref{sec:problems}). Therefore, if $\Gamma$ is [globally]
asymptotically stable for the closed-loop system, necessarily
$\Gamma$ is [globally] asymptotically stable relative to $\cO$
for the closed-loop system. In other words,~\eqref{eq:sys} is
necessarily locally $\Gamma$-detectable [$\Gamma$-detectable].
\end{proof}
We conclude this section with the following result, which gives
conditions that are alternatives to the $\Gamma$-detectability
assumption.

\begin{prop}\label{prop:alternative_Gamma_detect}
Theorem~\ref{thm:semi-asymptotic_stability} still holds if the local
$\Gamma$-detectability [$\Gamma$-detectability] assumption is replaced
by the following condition:

\begin{enumerate}[(i')]
\item $\Gamma$ is stable relative to $V^{-1}(0)$ and $\Gamma$ is
  [globally] attractive relative to $\cO$.
\end{enumerate}
\end{prop}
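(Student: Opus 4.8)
The plan is to show that condition (i') is enough to recover all the hypotheses that the $\Gamma$-detectability assumption supplies in the proof of Theorem~\ref{thm:semi-asymptotic_stability}, so that the same argument goes through verbatim. Recall that local $\Gamma$-detectability means precisely that $\Gamma$ is semi-asymptotically stable relative to $\cO$ for the open-loop system, i.e.\ $\Gamma$ is both \emph{stable relative to $\cO$} and \emph{semi-attractive relative to $\cO$}. Condition (i') weakens the stability requirement: it asks only for stability relative to the smaller set $V^{-1}(0)$, while keeping the semi-attractivity relative to $\cO$. So the entire burden of the proof is to show that, for the closed-loop system, stability relative to $V^{-1}(0)$ together with semi-attractivity relative to $\cO$ implies semi-attractivity relative to $\cO$ \emph{and} stability relative to $\cO$ --- in other words, that the weaker stability hypothesis on the level set $V^{-1}(0)$ upgrades, in the passive closed-loop setting, to the full relative stability on $\cO$.

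The key mechanism is the inclusion $\Gamma \subset V^{-1}(0) \subset \cO \subset h^{-1}(0)$ from~\eqref{eq:inclusion} combined with the fact that $V$ is nonincreasing along closed-loop trajectories (inequality~\eqref{eq:Vdot}), so every sublevel set of $V$ is positively invariant. First I would argue relative stability on $\cO$: fix $\varepsilon>0$ and use stability relative to $V^{-1}(0)$ to find a neighbourhood on which trajectories stay within $B_\varepsilon(V^{-1}(0))$. The crucial point is that, \emph{on $\cO$}, the closed-loop dynamics coincide with the open-loop dynamics (since $\varphi|_{h=0}=0$ and $\cO\subset h^{-1}(0)$), and on $\cO$ one has $L_fV\leq 0$, so $V$ is still nonincreasing. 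Since trajectories starting in $\cO$ remain in $\cO$, and since on $\cO$ the set $V^{-1}(0)\cap\cO \supset \Gamma$ is being approached, I would use the invariance of the sublevel sets of $V$ restricted to $\cO$ to pass from ``close to $V^{-1}(0)$'' to ``close to $\Gamma$''. The argument mirrors the one in Proposition~\ref{prop:passivity_implies_local_stability}, where level-set invariance under~\eqref{eq:Vdot} was exactly what converted a $V$-distance bound into a neighbourhood-confinement statement; here the same device, applied within $\cO$, converts stability relative to $V^{-1}(0)$ into stability relative to $\cO$.

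Once stability relative to $\cO$ is established, $\Gamma$ is stable relative to $\cO$ and semi-attractive relative to $\cO$ (the latter by hypothesis (i') directly), hence semi-asymptotically stable relative to $\cO$, which is precisely local $\Gamma$-detectability. At that point the proof of Theorem~\ref{thm:semi-asymptotic_stability} applies without change, delivering the asserted [semi-]asymptotic stability of $\Gamma$ for the closed-loop system. The global case is handled identically, replacing stability relative to $V^{-1}(0)$ and semi-attractivity relative to $\cO$ by their global counterparts and invoking the global version of the reduction principle in Theorem~\ref{thm:reduction:asymptotic_stability:unbounded}.

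The main obstacle I anticipate is the step converting stability relative to $V^{-1}(0)$ into stability relative to the possibly larger set $\cO$. The subtlety is that $V^{-1}(0)$ can be a \emph{strict} subset of $\cO$, so a trajectory confined to a tube around $V^{-1}(0)$ is automatically confined to a tube around $\cO$ (since $\|x\|_\cO\leq\|x\|_{V^{-1}(0)}$), but to conclude convergence to $\Gamma$ one needs the semi-attractivity relative to $\cO$ to do the real work of pulling trajectories in, while relative stability must guarantee they do not wander away first. Making these two ingredients cooperate --- ensuring the domain of attraction relative to $\cO$ is actually reached and that the confinement afforded by the $V$-sublevel sets is compatible with the neighbourhoods produced by semi-attractivity --- is where the technical care lies, and it is essentially a re-examination of hypothesis (iii) of Theorem~\ref{thm:reduction:attractivity} in this weaker-stability setting.
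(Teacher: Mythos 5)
Your overall strategy---upgrade condition (i') to local $\Gamma$-detectability and then invoke Theorem~\ref{thm:semi-asymptotic_stability} as a black box---is viable, but the mechanism you offer for the one step that carries all the weight does not work, and that is a genuine gap. You claim that the sublevel-set invariance device of Proposition~\ref{prop:passivity_implies_local_stability}, ``applied within $\cO$, converts stability relative to $V^{-1}(0)$ into stability relative to $\cO$.'' It cannot: that device only converts ``$V(x_0)$ small'' into ``the trajectory stays where $V$ is small,'' i.e.\ into confinement near the \emph{level set} $V^{-1}(0)$ (local stability of $V^{-1}(0)$ near $\Gamma$). In the very situation this proposition exists for, $\Gamma \subsetneq V^{-1}(0)$, smallness of $V$ carries no information about the distance to $\Gamma$: a trajectory on $\cO$ starting near $\Gamma$ can remain in an arbitrarily thin tube around $V^{-1}(0)$ and still wander far from $\Gamma$ along $V^{-1}(0)$ before being pulled back by attractivity. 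This is exactly the failure mode of Example~\ref{ex:attractivity} and of the homoclinic-orbit discussion: attractivity plus confinement near the larger set does not yield stability relative to it. You in fact flag this conversion as the main obstacle in your closing paragraph, but the proof as written never closes it.

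The gap can be closed, but it needs the reduction machinery itself, not just level-set invariance. Apply Theorem~\ref{thm:reduction:stability:unbounded} to the flow restricted to the invariant set $\cO$ (on which closed- and open-loop dynamics coincide), with the nested triple $\Gamma \subset V^{-1}(0) \subset \cO$: condition (i') gives semi-asymptotic stability of $\Gamma$ relative to $V^{-1}(0)$ (attractivity relative to $V^{-1}(0)$ follows from attractivity relative to $\cO$ since $V^{-1}(0)\subset\cO$); the argument of Proposition~\ref{prop:passivity_implies_local_stability}, run with $V^{-1}(0)$ in place of $\cO$, gives local stability of $V^{-1}(0)$ near $\Gamma$; and in the unbounded case local uniform boundedness is inherited by restriction. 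This yields stability of $\Gamma$ relative to $\cO$, hence $\Gamma$-detectability, and Theorem~\ref{thm:semi-asymptotic_stability} then finishes (the proofs in the appendices use only metric and flow properties, so they apply to the restriction to the closed invariant set $\cO$). Note that this is still a different route from the paper's: the paper omits the proof and states that it reruns the arguments of Theorems~\ref{thm:reduction:attractivity} and~\ref{thm:reduction:asymptotic_stability:unbounded} directly under (i'); that works because, in those contradiction arguments applied to the closed-loop system, relative stability is only ever invoked at points of limit sets of closed-loop trajectories that meet $\Gamma$, and such a limit set lies in $V^{-1}(0)$ automatically ($V$ is constant on it and vanishes at its points in $\Gamma$), so stability relative to $V^{-1}(0)$ suffices there. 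Either repair is acceptable; your draft contains neither.
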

We omit the proof of this proposition.  If the sufficient conditions
for $\Gamma$-detectability in
Proposition~\ref{prop:Gamma-detect:sufficient_conditions} fail, rather
than checking for $\Gamma$-detectability one may find it easier to
check condition (i') in
Proposition~\ref{prop:alternative_Gamma_detect}, because verifying
whether $\Gamma$ is stable relative to $V^{-1}(0)$ does not require
finding the maximal open-loop invariant subset $\cO$ of $h^{-1}(0)$;
moreover, checking that $\Gamma$ is attractive relative to $\cO$
amounts to checking the familiar condition in~\cite{ByrIsiWil91}
\[
h(\phi(t,x_0)) \equiv 0 \implies \phi(t,x_0) \to \Gamma \text{ as } t
\to + \infty.
\]
Note that, in the framework of~\cite{ByrIsiWil91} and~\cite{Shi00_2}, the
requirement that $\Gamma$ be stable relative to $V^{-1}(0)$ is
trivially satisfied because in these references it is assumed that
$\Gamma = V^{-1}(0)$.

\section{Discussion}\label{sec:discussion}

Theorem 3.2 in~\cite{ByrIsiWil91} and Theorem 2.3 in~\cite{Shi00_2},
dealing with the special case when $\Gamma=V^{-1}(0)$ $(=\{0\})$ and
$\Gamma$ is compact, become corollaries of our main result,
Theorem~\ref{thm:semi-asymptotic_stability}.  We have already shown
(see Lemmas~\ref{lem:gamma_detect-zsd}
and~\ref{lem:gamma_detect-Vdetect}) that in this special case the
properties of zero-state detectability (when $\Gamma = \{0\}$), and
$V$-detectability coincide with our notion of
$\Gamma$-detectability. In this context, then, the results
in~\cite{ByrIsiWil91} and~\cite{Shi00_2} state that local
$\Gamma$-detectability is a sufficient condition for the asymptotic
stabilization of the origin using a passivity-based feedback. We have
shown that actually this condition is also {\em necessary.} An
analogous remark can be made for the global solution of the set
stabilization problem.

The theory in~\cite{ByrIsiWil91} and~\cite{Shi00_2} does not handle
the special case when $\Gamma$ is compact and $\Gamma\subsetneq
V^{-1}(0)$, while our theory does. This case includes the important
situation when one wants to stabilize an equilibrium ($\Gamma=\{0\}$)
but the storage is only positive semi-definite. Based on the results
in~\cite{ByrIsiWil91} and~\cite{Shi00_2}, it may be tempting to
conjecture that Theorem 3.2 in~\cite{ByrIsiWil91} and Theorem 2.3
in~\cite{Shi00_2} still hold if one employs the following notion of
detectability:
\begin{equation}\label{eq:detectability_conjecture}
\begin{aligned}
(\forall x_0 \in \cN(\Gamma) ) \ h(\phi(t,x_0)) = 0 &\text{ for all } t
\in \Re\\
&\implies \phi(t,x_0) \to \Gamma,
\end{aligned}
\end{equation}
which corresponds to requiring that $\cO$ in Definition~\ref{defn:O}
is an attractor for the open-loop system.  This conjecture is
false: we have shown that (local) $\Gamma$-detectability (i.e., the
asymptotic stability of $\Gamma$ relative to $\cO$ for the
open-loop system) is a necessary condition for the stabilization of
$\Gamma$. Even if one relaxes the asymptotic stability requirement and
just asks for attractivity of $\Gamma$ relative to $\cO$, the
above conjecture is still false. As a matter of fact,
Theorem~\ref{thm:reduction:attractivity} suggests that even in this
case (local) $\Gamma$-detectability is a key property. A
counter-example illustrating this loss of attractivity is the
pendulum.  The upright equilibrium is globally attractive, but
unstable, relative to the homoclinic orbit of the pendulum. Despite
the fact that a passivity-based feedback can be used to asymptotically
stabilize the homoclinic orbit (see,
e.g.,~\cite{Fra96},~\cite{AstFur00}), the upright equilibrium is
unstable for the closed-loop system. This well-known phenomenon finds
explanation in the theory developed in this paper: the cause of the
problem is the instability of the upright equilibrium relative to the
homoclinic orbit.  We next present another explicit counter-example
illustrating our point.

\begin{figure*}[tb]
\psfrag{x1}{{\small $x_1$}}
\psfrag{x2}{{\small $x_2$}}
\psfrag{x3}{{\small $x_3$}}
\psfrag{G}{{\small $\Gamma$}}
\psfrag{O}{{\small $\cO$}}
\centerline{\epsfig{figure=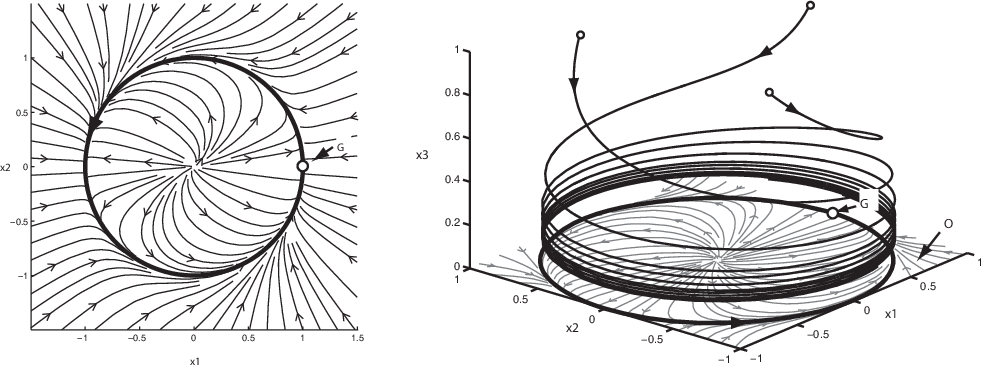,width=.8\textwidth}}
\caption{On the left-hand side, phase portrait on $\cO$ for the
  open-loop system~\eqref{eq:example:open-loop_on_O}. $\Gamma$ is
  globally attractive relative to $\cO$. On the right-hand side,
  closed-loop system~\eqref{eq:example:sys} with feedback
  $u=-y$. $\Gamma$ is {\em not} attractive relative to the whole state
  space.}
\label{fig:closed-loop_attractivity}
\end{figure*}

\begin{example}
Consider the  control system with state $(x_1,x_2,x_3)$,
\begin{equation}\label{eq:example:sys}
\begin{aligned}
&\dot r = -r(r-1) \\
&\dot \theta = \sin^2(\theta/2)+x_3\\
&\dot{x}_3=u \\
&y=x_3^3,
\end{aligned}
\end{equation}
where $(r,\theta) \in (0,+\infty) \times S^1$ represent polar
coordinates for $(x_1,x_2)$. The control system is passive with
storage $V(x) = x_3^4/4$. Let $\Gamma$ be the equilibrium point
$\{(x_1,x_2,x_3): x_1=1, x_2=x_3=0\}$ and note that
$\cO=\{(x_1,x_2,x_3):x_3=0\}$. On $\cO$, the open-loop dynamics read as
\begin{equation}\label{eq:example:open-loop_on_O}
\begin{aligned}
&\dot r = -r(r-1) \\
&\dot \theta = \sin^2(\theta/2),
\end{aligned}
\end{equation}

\noindent
and it is easily seen that the equilibrium $\Gamma$ attracts every
point in $\cO$ except the origin. Hence, $\Gamma$ is attractive
relative to $\cO$, but unstable (indeed, the unit circle is a
homoclinic orbit of the equilibrium); see
Figure~\ref{fig:closed-loop_attractivity}. Therefore,
condition~\eqref{eq:detectability_conjecture} holds but the system is
not locally $\Gamma$-detectable. Consider the passivity-based feedback
$u = -y$, which renders $\cO$ globally asymptotically stable. Now for
any initial condition off of $\cO$ such that $(x_1(0),x_2(0)) \neq
(0,0)$, $x_3(0)>0$, the corresponding trajectory is bounded, but its
positive limit set is the unit circle on $\cO$, and therefore it is
not a subset of $\Gamma$; see
Figure~\ref{fig:closed-loop_attractivity}. In conclusion, $\Gamma$ is
not attractive for the closed-loop system (and neither is it stable).
This example illustrates the fact that, when $\Gamma \subsetneq
V^{-1}(0)$ is compact, simply requiring
condition~\eqref{eq:detectability_conjecture} in place of
$\Gamma$-detectability may not be enough for attractivity of $\Gamma$.
\end{example}
In the light of Theorem~\ref{thm:semi-asymptotic_stability} and the
example above, it is clear that the addition of the stability
requirement on $\Gamma$, relative to $\cO$, is a crucial enhancement
to the notions of detectability in~\cite{ByrIsiWil91} and~\cite{Shi00_2}.

\bibliographystyle{IEEEtran}        
\bibliography{tac}

\end{document}